\newtheorem{theorem}{Theorem}[section]
\newtheorem{lemma}[theorem]{Lemma}
\theoremstyle{definition}
\theoremstyle{remark}
\numberwithin{equation}{section}
\def\grm{{\mathfrak m}}
 \def\Del{{\Delta}}
\def\d{{\partial}}
\def\le{\leqslant} \def\ge{\geqslant}
\def\d{{\,{\rm d}}}
\begin{document}
\title[Smooth Weyl sums]{Estimates for smooth Weyl sums on minor arcs}
\author[J\"org Br\"udern]{J\"org Br\"udern}
\address{Mathematisches Institut, Bunsenstrasse 3--5, D-37073 G\"ottingen, Germany}
\email{jbruede@gwdg.de}
\author[Trevor D. Wooley]{Trevor D. Wooley}
\address{Department of Mathematics, Purdue University, 150 N. University Street, West 
Lafayette, IN 47907-2067, USA}
\email{twooley@purdue.edu}
\subjclass[2020]{11L07, 11L15,  11P05}
\keywords{Smooth Weyl sums, exponential sums, Hardy-Littlewood 
method.}
\thanks{The authors acknowledge support by Akademie der Wissenschaften zu G\"ottingen. First author supported by Deutsche Forschungsgemeinschaft 
Project Number 462335009. Second author supported by NSF grant DMS-2001549.}
\date{}

\begin{abstract} We provide new estimates for smooth Weyl sums on minor arcs and explore their consequences for the distribution of the  fractional 
parts of $\alpha n^k$. In particular, when $k\ge 6$ and $\rho(k)$ is defined via the relation $\rho(k)^{-1}=k(\log k+8.02113)$, then for all large numbers 
$N$ there is an integer $n$ with $1\le n\le N$ for which $\| \alpha n^k\|\le N^{-\rho(k)}$.\end{abstract}

\maketitle

\section{Introduction}
 Estimates for smooth Weyl sums on minor arcs play a prominent role in applications of the Hardy-Littlewood method, in the study of the distribution of 
fractional parts of polynomial sequences, and in many other branches of the theory of numbers. When $2\le R\le P$, let $\mathscr A(P,R)$ denote the set 
of natural numbers not exceeding $P$ having all of their prime factors bounded by $R$. Given a natural number $k\ge 2$, define the Weyl sum
\begin{equation}\label{1.1}
f(\alpha;P,R)=\sum_{n\in \mathscr A(P,R)}e(\alpha n^k),
\end{equation}
where, as usual, we write $e(z)$ for $\mathrm e^{2\pi {\rm i}z}$. In this context, a typical choice of minor arcs  is the set $\mathfrak n$ of all real numbers 
$\alpha$ with the property that when $q\in\mathbb N$ and $ a\in\mathbb Z$ are coprime with $|q\alpha-a|\le P^{1-k}$, one has $q>P$. Improving earlier 
estimates of Vaughan \cite{V89}, Wooley \cite{W95} showed that for each $\varepsilon>0$ there is a positive number $\eta=\eta(\varepsilon,k)$ such that 
uniformly in $2\le R\le P^\eta$ one has the estimate
\[
\sup_{\alpha\in\mathfrak n} |f(\alpha;P,R)|\ll P^{1-\rho(k)+\varepsilon},
\]
where $\rho(k)^{-1}=k(\log k + O(\log\log k))$. Our recent work \cite{BWCr} on Waring's problem yields progress on moment estimates for the Weyl sum 
$f(\alpha;P,R)$ that makes it possible to refine this bound, and make it more explicit.
 
\begin{theorem} \label{thm1.1}  Let $k\ge 6$, and define the positive number $\rho(k)$ by 
\begin{equation}\label{1.2}
\rho(k)^{-1} = k(\log k + 8.02113).
\end{equation}
Then, there is a positive number $\eta=\eta(k)$ with the property that uniformly in $2\le R\le P^\eta$ one has the estimate
\[
\sup_{\alpha\in\mathfrak n} |f(\alpha;P,R)|\ll P^{1-\rho(k)}.
\]
\end{theorem}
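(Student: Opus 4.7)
The plan is to adapt Wooley's~\cite{W95} framework for passing from mean value bounds to pointwise estimates on minor arcs, drawing now on the sharpened moment estimates for smooth Weyl sums available through the authors' recent work~\cite{BWCr} on Waring's problem. For $\alp\in\mathfrak n$, I would begin with Dirichlet's approximation theorem applied with denominator $P^{k-1}$: this produces coprime integers $a,q$ with $1\le q\le P^{k-1}$ and $|q\alp-a|\le P^{1-k}$, and the minor arc hypothesis forces $q>P$. The next reduction exploits smoothness by factoring each $n\in\mathscr A(P,R)$ as $n=pm$, with $p$ a prime in a dyadic window $(M,2M]$ for a suitable $M$ that is a small power of $P$, and $m\in\mathscr A(P/p,R)$.

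After an application of Cauchy--Schwarz in the prime variable and a small number of Weyl-differencing steps, the task is to control a bilinear mean value of $f$ over a slightly smaller range. The crucial input here is the moment estimate supplied by~\cite{BWCr}, namely
\[
\int_0^1 |f(\alp;P,R)|^{2s}\d\alp \ll P^{2s-k+\Delta_s+\eps}
\]
valid for suitable $s$, where the sharpened exponents $\Delta_s$ are the new ingredient. A Weyl-type factor depending on the rational approximation $a/q$ to $\alp$, which decays in $q$ and hence genuinely saves under the hypothesis $q>P$, combines with the moment bound to give a pointwise estimate $\sup_{\alp\in\mathfrak n}|f(\alp;P,R)|\ll P^{1-\sig+\eps}$ for a $\sig=\sig(s,M,\Delta_s)$ still to be optimized.

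The remaining work is a numerical optimization of the parameters $s$ and $M$ and the number of differencing steps, after which the $P^{\eps}$ factor is absorbed by slightly enlarging $\eta$ and slightly shrinking the target $\rho(k)$. The optimal choice is expected to involve $s$ of order $k\log k$, and is what ultimately gives $\rho(k)^{-1}=k(\log k+8.02113)$. I expect the main obstacle to be the precise bookkeeping: the reduction from a mean value to a pointwise bound is by now classical, but extracting the sharp lower-order constant $8.02113$ from the improved $\Delta_s$ of~\cite{BWCr} demands careful tracking through the Weyl-differencing and subsequent optimization. The structural steps are standard; the numerical precision required for the constant is the technical heart of the argument.
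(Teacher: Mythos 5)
Your high-level skeleton (rational approximation, smooth decomposition, reduction to a mean value, numerical optimization over a parameter of size $k\log k$) matches the paper's plan, but the two genuinely new technical ingredients are missing, and one of the steps you propose (Weyl differencing) is not in the argument and would not give the stated exponent.

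First, after the smooth decomposition (which in the paper is not the prime decomposition $n=pm$ with $p\in(M,2M]$, but the Vaughan--Wooley factorisation into $n=uvd$ with $v\in\mathscr B(M/d,\varpi,R)$ and $u\in\mathscr A(P/M,\varpi)$, carrying a linear twist $e(\theta u)$ and a coprimality condition), the key step is not Cauchy--Schwarz plus Weyl differencing. Instead, the paper applies the Sobolev--Gallagher inequality to the function $|h(\beta)|^t$, converting a sum of $|h(\alpha(vd)^k)|^t$ over well-spaced points into $\xi^{-1}\int_0^1|h|^t+\int_0^1|h^{t-1}h'|$. This is precisely what allows \emph{real} (non-even) moments $t$, which is essential: the eventual optimisation takes $t\approx k\log k+k(1+\log D)$, not an even integer. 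Because $h$ carries the twist $e(\theta u)$ and the coprimality condition $(u,q)=1$, the moment estimates needed are for weighted sums, and the paper must first establish that $\Delta_t=k\delta_t$ is \emph{weight-uniform} admissible for real $t\ge 4$ (Theorem \ref{thm2.2}); your proposal only invokes the unweighted even-moment bound from \cite{BWCr}.

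Second, the bound from Lemma \ref{lem3.1} only wins when $q$ is not too large. For $q>P^{k\sigma/(2\tau)}$ the paper switches to an entirely separate input: the Weyl-type estimate $\sup_{\alpha\in\mathfrak m(Q)}|f(\alpha;P,R)|\ll PQ^{-1/(Dk^2)}$ with $D=4.5139506$ from \cite[Lemma 3.4]{BWFr}. It is exactly this external constant $D$ that produces $\phi=D+2+\log D=8.0211233\ldots$ in \eqref{1.2}. Your proposal alludes to "a Weyl-type factor depending on $a/q$... which decays in $q$" as if it were embedded in the mean-value argument, but it is a complementary estimate handling the range where the mean-value route fails, and the optimisation in \eqref{3.3} explicitly balances the two regimes via $\tau(k)$. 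Without naming these two ingredients -- the Sobolev--Gallagher passage (in place of the large sieve of \cite[Lemma 4.1]{W95}) and the $\mathfrak m(Q)$ estimate from \cite{BWFr} -- the constant $8.02113$ cannot be derived, and the Weyl-differencing steps you posit would degrade the exponent well below what is claimed.
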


This new bound implies an improvement of \cite[Theorem 1.2]{W95} concerned with localised estimates for the fractional parts of $\alpha n^k$. 

\begin{theorem}\label{thm1.2} Suppose that $\alpha \in \mathbb R$. Let $k\ge 6$ and define $\rho(k)$ via \eqref{1.2}. Then, whenever $N$ is sufficiently 
large in terms of $k$, one has
\[
\min_{1\le n\le N} \|\alpha n^k\| \le N^{-\rho(k)}.
\]
\end{theorem}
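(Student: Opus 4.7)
The plan is to argue by contradiction from Theorem~\ref{thm1.1}. Suppose, aiming for a contradiction, that $\|\alpha n^k\|>N^{-\rho(k)}$ for every integer $n$ with $1\le n\le N$. Since $\mathscr A(N,R)\subseteq[1,N]$ for any $R\le N$, the hypothesis in particular holds for every $n\in\mathscr A(N,R)$, where I take $R=N^{\eta(k)}$ with $\eta(k)$ as supplied by Theorem~\ref{thm1.1}. It suffices to derive a contradiction under this stronger restriction.

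The first step is to deploy Dirichlet's approximation theorem to locate $\alpha$: there exist coprime integers $a,q$ with $1\le q\le N^{k-1}$ and $|q\alpha-a|\le N^{1-k}$. If $q\le N^{1-\rho(k)/(k-1)}$, the choice $n=q$ yields
\[
\|\alpha q^k\|\le q^{k-1}|q\alpha-a|\le q^{k-1}N^{1-k}\le N^{-\rho(k)},
\]
contradicting the standing hypothesis. I may therefore assume $q>N^{1-\rho(k)/(k-1)}$; a subsidiary Diophantine argument, exploiting the fact that there is at most one rational with small denominator within a given radius of $\alpha$, then places $\alpha$ in the minor arc set $\mathfrak n$ corresponding to a suitable parameter $P$ close to $N$. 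Theorem~\ref{thm1.1} now furnishes the smooth Weyl sum estimate $|f(\alpha;P,R)|\ll P^{1-\rho(k)}$.

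The remaining step is to convert this Weyl sum estimate into a contradiction with the fractional-parts hypothesis. I would use Fourier analysis on $\mathbb R/\mathbb Z$: for $\Delta$ slightly exceeding $N^{\rho(k)}$, the periodic triangle function $\psi(x)=\max(0,1-\Delta\|x\|)$ vanishes whenever $\|x\|>1/\Delta$, so that $\psi(\alpha n^k)=0$ for every $n\in\mathscr A(P,R)$. Its non-negative Fourier coefficients $\widehat\psi(h)$, with $\widehat\psi(0)=1/\Delta$ and $\sum_h\widehat\psi(h)=\psi(0)=1$, yield
\[
\frac{|\mathscr A(P,R)|}{\Delta}=-\sum_{h\ne 0}\widehat\psi(h)f(h\alpha;P,R).
\]
Bounding the right-hand side via Theorem~\ref{thm1.1} for each $h$ with $h\alpha\in\mathfrak n$, combined with a direct major arc evaluation of $f(h\alpha;P,R)$ for the exceptional $h$ and, if necessary, a Cauchy--Schwarz or second moment estimate to offset the $L^1$ loss inherent in summing Fourier coefficients, produces an inequality which is inconsistent for $N$ sufficiently large in terms of $k$.

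The principal obstacle, as in all arguments of this shape, is the bookkeeping for those $h\ne 0$ for which $h\alpha$ slips onto a major arc. One must track how the Diophantine approximations of $h\alpha$ are generated from those of $\alpha$ supplied in the first step, and ensure that the total contribution of such exceptional $h$ in the Fourier support remains strictly smaller than the main term $|\mathscr A(P,R)|/\Delta$. Carrying this out without any $\varepsilon$-loss in the exponent, so as to reach the clean bound $N^{-\rho(k)}$ rather than the weaker $N^{-\rho(k)+\varepsilon}$, is precisely where the sharp form of Theorem~\ref{thm1.1}---supplied by the authors' recent moment estimates of \cite{BWCr}---is used in earnest, and is what distinguishes the present theorem from \cite[Theorem~1.2]{W95}.
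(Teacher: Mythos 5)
Your high-level structure — a Fourier detector for small $\|\alpha n^k\|$, minor-arc Weyl sum bounds, and Diophantine analysis of the exceptional $h$ for which $h\alpha$ falls on a major arc — is exactly the structure of the paper's proof, which is carried out by transplanting \cite[\S6]{W95} and only re-verifying the key parameter inequality. However, there is a genuine gap in your choice of which Weyl-sum estimate to invoke. You apply Theorem~\ref{thm1.1}, whose bound holds only on $\mathfrak n$, i.e.\ on $\mathfrak m(P)$. But the argument needs a bound on a substantially larger set of minor arcs, so that the complementary major arcs are small. To see why, note that if some $h\le H$ has $h\alpha\in\mathfrak M(Q)$, then there exist coprime $q\le Q$ and $a$ with $|hq\alpha-a|\le QP^{-k}$, and the derived Diophantine bound reads
\[
\|\alpha(hq)^k\|\le (hq)^{k-1}|hq\alpha-a|\le (HQ)^{k-1}\,Q\,P^{-k}.
\]
With $Q=P$, as forced on you by Theorem~\ref{thm1.1}, the right-hand side is essentially $H^{k-1}$, which is unbounded, so you obtain no contradiction from the exceptional $h$. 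The paper instead uses Theorem~\ref{thm3.3}, which gives $|f(\alpha;P,R)|\ll P^{1-\sigma(k)+\varepsilon}$ on the larger set $\mathfrak m(P^{\lambda(k)}R)$ with $\lambda(k)<1$ defined by \eqref{3.4}; with $Q=P^{\lambda}R$, the decisive inequality
\[
(HP^{\lambda(k)}R)^{k-1}P^{\lambda(k)-k}\ll P^{-\sigma(k)}
\]
does hold, and verifying it (which follows from \eqref{3.3} and \eqref{3.4} because $\lambda(k)<1$) is precisely the only thing the paper needs to check before quoting \cite[\S6]{W95}.

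A secondary misattribution: you suggest that the clean exponent $N^{-\rho(k)}$, free of any $\varepsilon$, is reached because Theorem~\ref{thm1.1} is ``sharp.'' In fact the $\varepsilon$ and implicit constants are removed in the standard way by exploiting the strict gap $\sigma(k)>\rho(k)$: the paper takes $\nu=(\sigma(k)-\rho(k))/2>0$, runs the argument with $H=P^{\sigma(k)-\nu}$, and concludes $\min_{1\le n\le N}\|\alpha n^k\|\ll N^{\nu-\sigma(k)}=N^{-(\sigma(k)+\rho(k))/2}$, which is $o(N^{-\rho(k)})$ and therefore $\le N^{-\rho(k)}$ for $N$ large. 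Your opening dichotomy on the size of $q$ is a reasonable warm-up, but the heart of the matter is the choice of $Q$, and with Theorem~\ref{thm1.1} alone the argument does not close.
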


Here, as is usual in this context, we write $\| \theta \|$ for $\min \{ |\theta -n|: n\in \mathbb Z\}$. For comparison, a similar conclusion is provided by 
\cite[Theorem 1.2]{W95} with $\rho(k)^{-1}=k(\log k+O(\log \log k))$. The numerical values for permissible $\rho(k)$ may be improved for small values of 
$k$. We direct the reader to Theorems \ref{thm4.1} and \ref{thm5.1} for explicit such refinements valid for $6\le k\le 20$. These conclusions are superior to 
all estimates hitherto available when $k\ge 10$.

Our proof of Theorem \ref{thm1.1} draws inspiration from the second author's earlier work \cite{W93, W95}, but also imports our more recent ideas 
through an estimate of Weyl's type that occurs as \cite[Theorem 3.5]{BWFr}. This bound is most powerful when the argument $\alpha$ is close to a fraction 
$a/q$ with $(a,q)=1$ and $q$ is of rough size $P^{k/2}$.  This results in  genuinely improved performance of the overall infrastructure underlying the 
proof of \cite[Theorem 1.1]{W95}. In addition, we improve the large sieve estimate embodied in \cite[Section 4]{W95}. The large sieve is replaced by a 
more direct use of the Sobolev-Gallagher inequality to remove an unwanted restriction to even moments of smooth Weyl sums in the final estimate 
(\cite[Lemma 4.1]{W95}). Having achieved the latter, we use the occasion to supply admissible exponents for moments of order $t$, with $t>4$ a real, not 
necessarily even number. This result, Theorem \ref{thm2.1} below, will prove useful in applications of major arcs moment estimates, as once again the 
restriction to even moments in optimisation procedures like those in \cite[Section 8]{PN1} or \cite[Section 6]{PN2} is certainly undesired, typically 
accommodated {\it a posteriori}, and now removable, at least for larger $k$.

\section{Admissible exponents}
Our goal in this section is to establish estimates for moments of smooth Weyl sums of sufficient flexibility that technical complications in our later 
applications may be avoided. For the remainder of the paper, we fix a natural number $k\ge 2$. Recall the definition \eqref{1.1}, and define the moment
\[
U_t(P,R) = \int_0^1 |f(\alpha;P,R)|^t\,\mathrm d \alpha,
\]
where $t$ is a non-negative real number. Following earlier convention, we say that the real number $\Delta_t$ is {\em admissible} (for $t$) when, for each 
$\varepsilon>0$, there exists $\eta>0$ having the property that, whenever $2\le R\le P^\eta$, one has
\[
U_t(P,R)\ll P^{t-k+\Delta_t+\varepsilon}. 
\]
We note that admissible exponents $\Delta_t$ are non-negative and may be chosen so that $\Delta_t\le k$.\par

In early work on moments of smooth Weyl sums admissible exponents are denoted differently. Since the discussion was focussed on moments of order 
$t=2s$ with $s\in\mathbb N$, the subscript of the exponent was often $s$, not $2s$, which would be in line with our definition. The reader should keep this 
in mind when comparing our findings with earlier results.\par

In order to simplify our exposition, we henceforth adopt the following convention concerning $\varepsilon$, $R$ and $\eta$. First, whenever $\varepsilon$ 
occurs in a statement, we assert that the statement holds for each positive value of $\varepsilon$. Implicit constants hidden in the symbols of Vinogradov 
and Landau may depend on the value assigned to $\varepsilon$. Second, should $R$ or $\eta$ appear in a statement, then it is asserted that the 
statement holds whenever $R\le P^\eta$ and $\eta$ is taken to be a positive number sufficiently small in terms of $\varepsilon$.\par

For each non-negative number $t$, we define the positive number $\delta_t$ to be the unique solution of the equation 
\begin{equation}\label{2.3}
\delta_t+ \log \delta_t = 1-t/k.
\end{equation}
The conclusion of \cite[Theorem 2.1]{W93} shows that when $k\ge 4$ and $t$ is an even integer with $t\ge 4$, then the exponent $\Delta_t=k\delta_t$ is 
admissible. Note that our earlier cautionary comment applies, as the quantity $\delta_{s,k}$ occuring in the statement of \cite[Theorem 2.1]{W93} is equal 
to our $\delta_{2s}$. Moreover, when $t=2s$ with $s$ a natural number, then it follows via orthogonality that $U_t(P,R)$ is equal to the number of solutions 
of the equation
\begin{equation}\label{2.4}
x_1^k+\cdots+ x_s^k = y_1^k+\cdots+ y_s^k,
\end{equation}
with $x_i,y_i\in \mathscr A(P,R)$, a quantity which in \cite{W93} is denoted $S_s(P,R)$. We now extend this earlier result to the situation in which $t$ is 
permitted to be any real number with $t\ge 4$.

\begin{theorem}\label{thm2.1}
Let $k\ge 6$ and suppose that $t$ is a real number with $t\ge 4$. Then the exponent $\Delta_t=k\delta_t$ is admissible.
\end{theorem}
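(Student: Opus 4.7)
Plan: The theorem extends \cite[Theorem 2.1]{W93}, which handles admissibility of $k\delta_t$ for even integer $t\ge 4$, to arbitrary real $t\ge 4$. Given a real $t\ge 4$, I would write $t=2s+2\lambda$ with $s\ge 2$ an integer and $\lambda\in[0,1)$. The case $\lambda=0$ is exactly the earlier result, so assume $\lambda\in(0,1)$.

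The most natural first move is H\"older's inequality, which yields
\[
U_t(P,R)\le U_{2s}(P,R)^{1-\lambda}U_{2s+2}(P,R)^\lambda,
\]
combined with the known bounds $U_{2j}(P,R)\ll P^{2j-k+k\delta_{2j}+\varepsilon}$ for $j\in\{s,s+1\}$. This produces the admissible exponent $(1-\lambda)k\delta_{2s}+\lambda k\delta_{2s+2}$. However, implicitly differentiating \eqref{2.3} twice shows that $\delta_t''>0$, so $\delta_t$ is strictly convex on $(0,\infty)$, and the chord above strictly exceeds $k\delta_t$ for $\lambda\in(0,1)$. The excess is a positive quantity depending only on $k$ and $\lambda$, so it cannot be absorbed into the $P^\varepsilon$ factor; pure H\"older interpolation is genuinely insufficient.

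To close the convexity gap, I would revisit the iterative scheme used to prove \cite[Theorem 2.1]{W93}. That scheme propagates a bound on $U_{2s'}$ into a bound on $U_{2s'+2}$ via Hua-type mean value inequalities, efficient differencing, and H\"older. Although orthogonality intervenes at intermediate stages and forces integer exponents there (in order to count solutions of \eqref{2.4}), the concluding step of the iteration, in which the bound is extracted from the recursion, uses only H\"older together with mean value estimates that are insensitive to the integrality of the exponent. Running the recursion with an even integer base case and a real-parameter concluding step should produce the admissibility of $\Delta_t=k\delta_t$ directly, since \eqref{2.3} is precisely the fixed-point relation driving the recursion independently of whether $t$ is an integer.

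The principal obstacle I anticipate lies in the concluding step: one must check that allowing a real exponent there does not introduce extra logarithmic or constant losses, or force a further shrinkage of the permissible range of $R$. I expect the hypothesis $k\ge 6$ (compared to $k\ge 4$ in \cite[Theorem 2.1]{W93}) to enter precisely to provide enough iterative margin to absorb these technical adjustments cleanly, thereby preserving the exact form of \eqref{2.3} in the final admissible exponent.
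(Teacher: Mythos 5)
Your diagnosis of the obstruction is exactly right: H\"older interpolation between $U_{2s}$ and $U_{2s+2}$ with the nominal exponents $k\delta_{2s}$, $k\delta_{2s+2}$ produces a chord that strictly exceeds $k\delta_t$, by strict convexity, and this excess is a fixed positive constant that cannot be hidden in $P^\varepsilon$. You also correctly anticipate that the hypothesis $k\ge 6$ is there to supply extra margin from the iteration. But your proposed remedy --- running the recursion of \cite{W93} with a ``real-parameter concluding step'' --- does not describe a usable mechanism: efficient differencing, the engine of that recursion, requires a Diophantine interpretation of the mean value under consideration (counting solutions of \eqref{2.4}) and is therefore locked to even integer moments at every stage. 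There is no fractional step available inside that iteration, and the only available way to reach non-integral $t$ is H\"older, so as stated the plan is circular.

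The missing idea is that one should H\"older-interpolate between \emph{strictly stronger} admissible exponents at the even integers, so that the resulting chord lies below the curve $k\delta_t$. Concretely, the paper first shows that the exponent $\Delta_{2s}$ defined by $\Delta_{2s}/k + \log(\Delta_{2s}/k) = 1 - 2s/k - 5/(16k^2)$ is admissible: for $s=2$ this follows from Hua's lemma, and for $s>2$ one turn of the iteration of \cite{W93} gains an error term $E/(2k^2)$ with $E\le k\,2^{2-k}-1\le -5/8$, which is precisely where $k\ge 6$ enters. Thus $\Delta_{2s}$ lies below $k\delta_{2s}$ by a quadratic safety margin of order $1/k^2$. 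One further turn of the same iteration then yields an admissible $\Delta'_{2s+2}=\Delta_{2s}\bigl(1-(2-\omega)/(k+\Delta_{2s})\bigr)$ with $\omega=2^{1-k}(1-\Delta_{2s}/k)$. H\"older between $\Delta_{2s}$ and $\Delta'_{2s+2}$ gives a weight-uniform admissible $\Delta_t$, and a short computation using $2-\omega\ge 1+\Delta_{2s}/k$ shows $\Delta_t/k + \log(\Delta_t/k)\le 1-t/k$, i.e.\ $\Delta_t\le k\delta_t$: the built-in margin exactly absorbs the convexity loss you identified. That quantitative ``overshoot at the even integers, then interpolate'' step is what your proposal lacks.
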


This is a special case of a more general theorem in which a weighted analogue  of the exponential sum $f(\alpha;P,R)$ appears. When $w(n)\in\mathbb C$ 
$(n\in\mathbb N)$, we  define
\[
\|w\|_X = \max_{1\le n\le X} |w(n)|.
\]
Also, when $t$ is a positive number, put
\begin{equation}\label{2.6} 
U_t(P,R;w) = \int_0^1 \Big| \sum_{n\in \mathscr A(P,R)}w(n)e(\alpha n^k) \Big|^t\,\mathrm d \alpha .
\end{equation}
We say that the real number $\Delta_t$ is {\it weight-uniform admissible} when, for each $\varepsilon>0$, there exists $\eta>0$ having the property that, 
whenever $2\le R\le P^\eta$, uniformly in $w$ one has
\begin{equation}\label{2.1}
U_t(P,R;w) \ll\|w\|^t_P\, P^{t-k+\Delta_t+\varepsilon}.
\end{equation}
Note that when $s$ is a natural number,  it follows by orthogonality that
\[
U_{2s}(P,R;w) = \sum_{\mathbf x,\mathbf y} \prod_{i=1}^sw(x_i){\overline {w(y_i)}},
\]
where $x_i,y_i \in\mathscr A(P,R)$ $(1\le i\le s)$ are constrained by \eqref{2.4}. Thus, one has
\begin{equation}\label{2.7}
U_{2s} (P,R;w) \le \|w\|^{2s}_P \, U_{2s}(P,R),
\end{equation}
and hence the exponent $\Delta_{2s}$ is weight-uniform admissible whenever it is admissible.

\begin{theorem}
\label{thm2.2} Let $k\ge 6$ and suppose that $t$ is a real number with $t\ge 4$. Then the exponent $\Delta_t=k\delta_t$ is weight-uniform admissible.
\end{theorem}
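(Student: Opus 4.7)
The plan is to split the argument into the even-integer case and an extension to real moments. For $t=2s$ with $s\ge 2$ an integer, the unweighted admissibility of $\Delta_{2s}=k\delta_{2s}$ is \cite[Theorem~2.1]{W93}, and weight-uniformity follows immediately from \eqref{2.7} by pulling $\|w\|_P^{2s}$ outside the integral.

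For real $t\ge 4$ that is not an even integer, the natural first move is H\"older interpolation. Writing $2s=2\lfloor t/2\rfloor$ and $\theta=(t-2s)/2\in[0,1)$, so that $t=(1-\theta)\cdot 2s+\theta\cdot 2(s+1)$, one has
\[
U_t(P,R;w)\le U_{2s}(P,R;w)^{1-\theta}\,U_{2(s+1)}(P,R;w)^{\theta},
\]
and invoking the weighted even-integer case on each factor shows that the convex combination $(1-\theta)k\delta_{2s}+\theta k\delta_{2(s+1)}$ is weight-uniform admissible. However, the implicit relation \eqref{2.3} makes $\delta_t$ strictly convex in $t$ (one checks that $\delta_t''=\delta_t/(k^2(1+\delta_t)^3)>0$), so this chord strictly exceeds $k\delta_t$ on every open interval $(2s,2s+2)$, and na\"ive H\"older interpolation falls just short of the claimed exponent.

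To reach the precise value $k\delta_t$, I would revisit the iterative construction behind \cite[Theorem~2.1]{W93}, in which $k\delta_{2s}$ arises as the fixed point of a recursion built from H\"older and Cauchy-Schwarz inequalities together with mean-value estimates for differenced smooth Weyl sums, the formula \eqref{2.3} being exactly the continuum limit of this recursion. Running the same iteration with a real parameter in place of $2s$ throughout, and replacing each appeal to orthogonality by a H\"older inequality on the corresponding real moment, one arrives at the limiting ODE $\delta_t'=-\delta_t/\bigl(k(1+\delta_t)\bigr)$ and hence admissibility of $\Delta_t=k\delta_t$ directly, without passing through the chord. Weight-uniformity propagates step by step, since $\|w\|_P$ factors out of every H\"older step with the correct exponent and the differenced sums accept the corresponding weighted versions. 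The main obstacle is the technical verification that every mean-value estimate for differenced smooth Weyl sums in \cite{W93} admits a real-moment analogue of the same quality, so that the H\"older steps introduced in place of orthogonality do not inflate the exponent beyond $k\delta_t$.
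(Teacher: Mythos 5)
You correctly identify the crux: na\"ive H\"older interpolation between the two admissible exponents $k\delta_{2s}$ and $k\delta_{2s+2}$ produces the chord, and strict convexity of $t\mapsto\delta_t$ means the chord strictly exceeds $k\delta_t$ for non-even $t$. But the remedy you then propose --- rerunning the entire iterative construction of \cite[Theorem~2.1]{W93} with a real moment parameter throughout, passing to a limiting ODE, and verifying that every mean-value lemma admits a real-moment analogue --- is not what the paper does, and as sketched it leaves precisely the hard part (``the technical verification that every mean-value estimate\ldots admits a real-moment analogue of the same quality'') unresolved.

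The paper's resolution is much lighter and stays entirely within H\"older interpolation. The observation is that the iteration in \cite{W93} in fact delivers admissible exponents \emph{strictly smaller} than $k\delta_{2s}$: the quantity $\Delta_{2s}$ defined by \eqref{2.8} carries an additional savings of $5/(16k^2)$ on the right-hand side compared with \eqref{2.3}, as one reads directly off the penultimate display on page~167 of \cite{W93} together with the crude bound $E\le k\,2^{2-k}-1\le -5/8$. Rather than interpolating between $k\delta_{2s}$ and $k\delta_{2s+2}$, the paper interpolates between this improved $\Delta_{2s}$ and a one-step refinement $\Delta'_{2s+2}=\Delta_{2s}\bigl(1-(2-\omega)/(k+\Delta_{2s})\bigr)$, $\omega=2^{1-k}(1-\Delta_{2s}/k)$, again from the \cite{W93} iteration. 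With $t=2s+2v$, the H\"older exponent collapses to the clean form $\Delta_t=\Delta_{2s}\bigl(1-v(2-\omega)/(k+\Delta_{2s})\bigr)$, and a short second-order Taylor computation then shows $\Delta_t/k+\log(\Delta_t/k)\le 1-t/k+E/(2k^2)$ with $E=-5/8+2kv\omega-v^2<0$ for $k\ge 6$, whence $\Delta_t\le k\delta_t$. In other words, the $5/(16k^2)$ slack in the anchor exponents is exactly what absorbs the convexity loss you identified, and no re-derivation of \cite{W93} for real moments is needed. The weight-uniformity step is handled as you indicate: one inequality analogous to \eqref{2.10} pulls $\|w\|_P^t$ out and reduces to the unweighted even moments.
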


\begin{proof}
Consider a natural number $s\ge 2$, and define the positive number $\Delta_{2s}$ to be the unique solution of the equation
\begin{equation}\label{2.8}
\frac{\Delta_{2s}}{k} + \log \frac{\Delta_{2s}}{k} = 1-\frac{2s}{k}-\frac{5}{16k^2}. 
\end{equation}
We assert that the  exponent $\Delta_{2s}$ is admissible. In order to confirm this assertion, observe first that when $s=2$ the exponent $k-2$ is admissible, 
as a consequence of Hua's lemma (see \cite[Lemma 2.5]{hlm}). Moreover, one has
\[
1-\frac{2}{k} + \log \Big(1-\frac2{k}\Big) < 1- \frac4{k} - \frac2{k^2} < 1 - \frac4{k} - \frac{5}{16k^2}.
\]
Hence, the exponent $\Delta_4$ defined via \eqref{2.8} is admissible. When $s$ is a natural number exceeding 2, meanwhile, it follows from the proof of 
\cite[Theorem 2.1]{W93} that an admissible exponent $\Delta$ for $2s$ exists satisfying
\[
\frac{\Delta}{k} + \log \frac{\Delta}{k} \le \delta_{2s-2} + \log \delta_{2s-2} - \frac2{k} + \frac{E}{2k^2},
\]
where $E\le k\,2^{2-k}-1\le -5/8$. This upper bound is a direct interpretation of the penultimate displayed equation of the proof of 
\cite[Theorem 2.1]{W93}, on page 167, with the inequality for $E$ immediately following the latter equation. In view of \eqref{2.3}, one has
\[
\frac{\Delta}{k} + \log \frac{\Delta}{k} \le 1 - \frac{2s}{k}- \frac{5}{16k^2}, 
\] 
and so a comparison with \eqref{2.8} confirms that $\Delta_{2s}$ is admissible.\par

Given a real number $t$ with $t\ge 4$, we put $s=\lfloor t/2\rfloor$ and $v= t/2 -s$, so that $t=2s+2v$. An application of H\"older's inequality leads from 
\eqref{2.6} via \eqref{2.7} to the bound
\begin{equation}\label{2.10}
U_t(P,R;w) \le \|w\|_P^t \, U_{2s}(P,R)^{1-v}\, U_{2s+2}(P,R)^v.
\end{equation}  
We have now effectively removed the weights from consideration.\par

We make use of the admissible exponent $\Delta_{2s}$  defined via \eqref{2.8}, but refine slightly the  admissible exponent $\Delta_{2s+2}$. Let 
$\omega= 2^{1-k}(1-\Delta_{2s}/k)$. The argument of the proof of \cite[Theorem 2.1]{W93} on page 167 shows that the positive number
\[
\Delta'_{2s+2} = \Delta_{2s} \Big(1- \frac{2-\omega}{k+\Delta_{2s}}\Big)
\]
is admissible for $2s+2$. In view of the upper bound \eqref{2.1}, we find from \eqref{2.10} that $\Delta_t$ is weight-uniform admissible, where
\begin{align*}
\Delta_t &= (1-v) \Delta_{2s} + v \Delta'_{2s+2} \\
&= (1-v) \Delta_{2s} + v \Delta_{2s} \Big(1- \frac{2-\omega}{k+\Delta_{2s}}\Big) \\
&= \Delta_{2s} \Big(1- v\,\frac{2-\omega}{k+\Delta_{2s}}\Big).
\end{align*}

Observe that
\[
\frac{\Delta_t}{k} + \log  \frac{\Delta_t}{k} =\frac{\Delta_{2s}}{k} + \log  \frac{\Delta_{2s} }{k} - v \, \frac{\Delta_{2s}}{k}\, 
\frac{2-\omega}{k+\Delta_{2s}}+ \log\Big( 1 - v \frac{2-\omega}{k+\Delta_{2s}}\Big) ,
\]
so that from \eqref{2.8} we obtain
\[ 
\frac{\Delta_t}{k} + \log  \frac{\Delta_t}{k} \le 1-\frac{2s}{k} - \frac5{16 k^2} - v\, \frac{\Delta_{2s}}{k}\, \frac{2-\omega}{k+\Delta_{2s}}- v\, 
 \frac{2-\omega}{k+\Delta_{2s}} - \frac{v^2(2-\omega)^2}{2(k+\Delta_{2s})^2}.
\]
Since 
\[
2-\omega = 2 - 2^{1-k}(1-\Delta_{2s}/k) \ge 2-(1-\Delta_{2s}/k) = 1+ \Delta_{2s}/k,
\]
we deduce that
\[
\frac{\Delta_t}{k} + \log  \frac{\Delta_t}{k} \le 1 - \frac{t}{k} + \frac{E}{2k^2},
\]
where
\[
E = -\frac58 +2kv\omega -v^2.
\]
Recalling that $k\ge 6$ and $0\le v\le 1$, we have
\[
E = -\frac58 +v(k\,2^{2-k} -v) <0,
\]
and so
\[
\frac{\Delta_t}{k} + \log  \frac{\Delta_t}{k} \le 1 - \frac{t}{k}.
\]
In view of relation \eqref{2.3}, it follows that the exponent $k\delta_t$ is weight-uniform admissible for $t$. This proves Theorem \ref{thm2.2}, and the 
conclusion of Theorem \ref{thm2.1} follows as a corollary.
\end{proof}

\section{A new upper bound for smooth Weyl sums}
We adapt the arguments of \cite{W95} so as to obtain a new minor arc estimate for the smooth Weyl sum $f(\alpha;P,R)$. We begin with an analogue of 
\cite[Lemma 4.1]{W95}. The strategy adopted in the proof of the latter makes use of the large sieve inequality to estimate an exponential sum stemming 
from an even power of $f(\alpha;P,R)$. Here, we adopt a more flexible approach, able to handle positive real powers of this exponential sum, by appealing 
directly to the Sobolev-Gallagher inequality.

\begin{lemma}\label{lem3.1}
Suppose that $1/2<\lambda<1$, and write $M=P^\lambda$. Let $\alpha\in \mathbb R$, and suppose that $a\in \mathbb Z$ 
and $q\in \mathbb N$ satisfy
\[
(a,q)=1,\quad |q\alpha -a|\le \tfrac{1}{2}(MR)^{-k},\quad q\le 2(MR)^k,
\]
and either $|q\alpha -a|>MP^{-k}$ or $q>MR$. Then, if $t$ is a real number with $t>k+1$ and $\Delta_t$ is weight-uniform admissible, one has
\[
f(\alpha;P,R)\ll M^{1+\varepsilon}+P^{1+\varepsilon}\bigl( M^{-1}(P/M)^{\Delta_t}(1+q(P/M)^{-k})\bigr)^{1/t}.
\]
\end{lemma}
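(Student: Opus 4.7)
The plan is to adapt the strategy of Lemma 4.1 in \cite{W95}, replacing the large sieve inequality employed there (which restricts $t$ to be an even integer) by a direct application of the Sobolev-Gallagher inequality, which handles arbitrary real $t>k+1$.

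First I would use the standard smooth-numbers decomposition to write
\[
f(\alpha;P,R) = O(M) + \sum_{h\in\mathscr H} g(\alpha h^k),
\]
where $\mathscr H$ is a set of $O(M^{1+\varepsilon})$ smooth numbers in a dyadic sub-range of $(M,MR]$, and $g(\gamma)$ is a weighted smooth Weyl sum in a single variable $m\le P/h\sim P/M$. The $O(M)$ term, coming from $n\in\mathscr A(P,R)$ with $n\le M$, produces the $M^{1+\varepsilon}$ summand of the conclusion. H\"older's inequality then gives
\[
\Big|\sum_{h\in\mathscr H} g(\alpha h^k)\Big|^t \le |\mathscr H|^{t-1} \sum_{h\in\mathscr H} |g(\alpha h^k)|^t.
\]

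Next, to each term $|g(\alpha h^k)|^t$ I apply the Sobolev-Gallagher inequality on an interval of half-length $\delta$ centred at $\alpha h^k\bmod 1$, obtaining
\[
|g(\gamma)|^t \ll \delta^{-1} \int_{\gamma-\delta}^{\gamma+\delta} |g(\beta)|^t\,d\beta + \int_{\gamma-\delta}^{\gamma+\delta} |g(\beta)|^{t-1} |g'(\beta)|\,d\beta,
\]
where $g'$ is a weighted Weyl sum of length $\sim P/M$ with weights of size $\ll(P/M)^k$. Summing over $h\in\mathscr H$ and letting $\mu$ denote the maximum number of intervals $(\alpha h^k-\delta,\alpha h^k+\delta)$ overlapping at any point of $[0,1]$, one obtains
\[
\sum_{h\in\mathscr H} |g(\alpha h^k)|^t \ll \mu\Big(\delta^{-1}\int_0^1 |g(\beta)|^t\,d\beta + \int_0^1 |g(\beta)|^{t-1}|g'(\beta)|\,d\beta\Big).
\]
The weight-uniform admissibility of $\Delta_t$ from Theorem \ref{thm2.2} bounds the first integral by $(P/M)^{t-k+\Delta_t+\varepsilon}$, and H\"older combined with weight-uniform admissibility applied to $g'$ bounds the second by $(P/M)^{t+\Delta_t+\varepsilon}$. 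Balancing with $\delta\sim(P/M)^{-k}$ yields $\sum_h |g(\alpha h^k)|^t \ll \mu\,(P/M)^{t+\Delta_t+\varepsilon}$.

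The main obstacle is to prove the overlap bound $\mu \ll (1+q(P/M)^{-k})P^\varepsilon$. This is a Farey-type spacing estimate: under the hypothesis $|q\alpha-a|\le\tfrac{1}{2}(MR)^{-k}$ with $(a,q)=1$, the values $\alpha h^k\bmod 1$ for $h\in\mathscr H$ cluster near the rationals $ah^k/q\bmod 1$, whose distribution is governed by $q$. The auxiliary conditions $q>MR$ or $|q\alpha-a|>MP^{-k}$ are precisely those needed to exclude the major-arc clustering that would otherwise inflate $\mu$. Combining with the $|\mathscr H|^{t-1}\ll M^{t-1+\varepsilon}$ loss from H\"older gives $|f(\alpha;P,R)|^t \ll M^{t+\varepsilon} + P^{t+\varepsilon} M^{-1}(P/M)^{\Delta_t}(1+q(P/M)^{-k})$, whence the claim on taking $t$-th roots.
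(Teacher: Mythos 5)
Your plan captures the paper's key idea exactly: replace the large sieve step of \cite[Lemma 4.1]{W95} by the Sobolev--Gallagher inequality applied to $|h(\beta)|^t$, which handles real $t>k+1$ rather than just even integers, and then bound the two resulting integrals via weight-uniform admissibility and H\"older before a final outer H\"older. This is precisely what the paper does.

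There is, however, a genuine gap in the overlap count, which you flag as ``the main obstacle'' but do not resolve. The decomposition the paper actually invokes (from \cite[eq.\ (4.1)]{W95}) is $f(\alpha;P,R)\ll M^{1+\varepsilon}+P^\varepsilon R\,g(\alpha;d,\varpi,\theta)$, where $g$ is an outer sum over $v\in\mathscr B(M/d,\varpi,R)$ with $(v,q)=1$ of inner sums with argument $\alpha(uvd)^k$; crucially it carries a divisor parameter $d$ with $1\le d\le P/M$ that your proposal suppresses. That parameter is not cosmetic: the outer sum has only $O(M/d)$ terms, so the H\"older loss is $(M/d)^{t-1}$, not $M^{t-1}$; and the spacing argument lifted from \cite[Lemma 4.1]{W95} partitions the $v$ into $L=O(q^\varepsilon d^k)$ classes, within each of which the points $\alpha(vd)^k$ are $\xi=\tfrac12\min\{q^{-1},(P/M)^{-k}\}$-spaced modulo $1$. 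The factor $d^k$ in the class count is killed against the reduced H\"older loss precisely because $t>k+1$, so that $d^k(M/d)^{t-1}\le M^{t-1}$; this is the \emph{only} place the hypothesis $t>k+1$ is used, and your sketch never invokes it. In your formulation, with $|\mathscr H|\sim M^{1+\varepsilon}$ and no $d$-structure, the asserted bound $\mu\ll(1+q(P/M)^{-k})P^\varepsilon$ is neither proved nor, as stated, correct: it is essentially $L(1+\xi^{-1}\delta)\ll q^\varepsilon d^k\bigl(1+q(P/M)^{-k}\bigr)$ with the $d^k$ erased. So the outline is right in spirit, but the decisive bookkeeping --- the $d$-parameter, the $d^k$-sized partition into well-spaced classes, and the appeal to $t>k+1$ to absorb it --- is missing.
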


\begin{proof} We begin our argument in the same manner as the proof of \cite[Lemma 4.1]{W95}. Define the set
\[
\mathscr B(M,\varpi,R)=\{ v\in \mathscr A(P,R)\cap (M,MR]: \text{$\varpi|v$, and $p|v$ implies $p\ge \varpi$}\},
\]
in which, here and henceforth, the letters $\varpi$ and $p$ both denote prime numbers. Then, from \cite[equation (4.1)]{W95}, we find that there exists an 
integer $d$ with $1\le d\le P/M$, a real number $\theta\in [0,1)$, and a prime number $\varpi$ with $\varpi\le R$, such that
\begin{equation}\label{3.1}
f(\alpha ;P,R)\ll M^{1+\varepsilon}+P^\varepsilon Rg(\alpha ;d,\varpi,\theta),
\end{equation}
where
\[
g(\alpha;d,\varpi,\theta)=\sum_{\substack{v\in \mathscr B(M/d,\varpi,R)\\ (v,q)=1}} \Biggl| \sum_{\substack{u\in \mathscr A(P/M,\varpi)\\ (u,q)=1}}
e(\alpha (uvd)^k+\theta u)\Biggr| .
\]

\par The argument of the proof of \cite[Lemma 4.1]{W95} shows that one can partition the integers $v\in \mathscr B(M/d,\varpi,R)$ with $(v,q)=1$ into $L$ 
classes $\mathscr V_1,\ldots ,\mathscr V_L$, with $L=O(q^\varepsilon d^k)$, having the following property. That is, for each $j$, as $v$ varies over 
$\mathscr V_j$, the real numbers $\alpha (vd)^k$ are spaced apart at least $\xi=\tfrac{1}{2}\min \{ q^{-1},(P/M)^{-k}\}$ modulo $1$. Define
\[
h(\alpha)=\sum_{\substack{u\in \mathscr A(P/M,\varpi)\\ (u,q)=1}}e(\alpha u^k+\theta u).
\]
Then an application of the Sobolev-Gallagher inequality (see \cite[Lemma 1.1]{Mon1971}) to the continuously differentiable function $|h(\beta)|^t$ reveals 
that for $1\le j\le L$, one has
\begin{equation}\label{3.2}
\sum_{v\in \mathscr V_j}|h(\alpha (vd)^k)|^t\ll \xi^{-1}\int_0^1|h(\beta)|^t\d\beta +\int_0^1|h(\beta)^{t-1}h'(\beta)|\d \beta .
\end{equation}

Define the weights
\[
w_1(n)=\begin{cases} e(\theta n),&\text{when $(n,q)=1$},\\
0,&\text{when $(n,q)>1$},\end{cases}
\]
and $w_2(n)=2\pi {\rm i}n^kw_1(n)$. Then it follows from \eqref{2.1} that, for each prime $\varpi\le R$, one has
\[
\int_0^1 |h(\beta)|^t\d\beta =U_t(P/M,\varpi;w_1)\ll (P/M)^{t-k+\Delta_t+\varepsilon}
\]
and
\[
\int_0^1 |h'(\beta)|^t\d\beta =U_t(P/M,\varpi;w_2)\ll (P/M)^{kt}(P/M)^{t-k+\Delta_t+\varepsilon}.
\]
An application of H\"older's inequality therefore leads from \eqref{3.2} to the bound
\begin{align*}
\sum_{v\in \mathscr V_j}|h(\alpha (vd)^k)|^t&\ll \xi^{-1}(P/M)^{t-k+\Delta_t+\varepsilon}+(P/M)^k(P/M)^{t-k+\Delta_t+\varepsilon}\\
&\ll \bigl( q+(P/M)^k\bigr) (P/M)^{t-k+\Delta_t+\varepsilon}.
\end{align*}
By summing these contributions from each set $\mathscr V_j$ for $1\le j\le L$, we thus obtain
\[
\sum_{\substack{ v\in \mathscr B(M/d,\varpi,R)\\ (v,q)=1}} |h(\alpha (vd)^k)|^t\ll q^\varepsilon d^k (P/M)^{t+\Delta_t+\varepsilon}(1+q(P/M)^{-k}).
\]
An application of H\"older's inequality to \eqref{3.1} therefore confirms that
\[
|f(\alpha;P,R)|^t\ll M^{t+\varepsilon}+P^\varepsilon d^k\bigl( 1+q(P/M)^{-k}\bigr) (M/d)^{t-1}(P/M)^{t+\Delta_t},
\]
whence, since $t>k+1$, we obtain the bound asserted in the statement of the lemma.
\end{proof}

Within the arguments to follow, we work with major arcs of various formats. Thus, when $1\le Q\le P^{k/2}$, let $\mathfrak M(Q)$ denote the union of the 
intervals
\[
\{\alpha\in[0,1) : |q\alpha -a| \le QP^{-k} \}
\]
with $0\le a\le q\le Q$ and $(q,a)=1$. We write $\mathfrak m(Q)=[0,1)\setminus \mathfrak M(Q)$ and $\mathfrak m= \mathfrak m(P)$. Note that 
$\mathfrak m = \mathfrak n \cap [0,1)$.\par

We next extract a minor arc estimate for smooth Weyl sums from Lemma \ref{lem3.1}. Given a family $(\Delta_s)_{s>0}$ of weight-uniform admissible 
exponents, we define the real number $\tau=\tau(k)$ by means of the relation
\[
\tau(k)=\max_{w\in \mathbb N}\frac{k-2\Delta_{2w}}{4w^2}.
\]
We have observed already that for $w\in \mathbb N$, any admissible exponent $\Del_{2w}$ is also weight-uniform admissible. Thus, as discussed in the 
preamble to \cite[Lemma 3.1]{BWFr}, one finds that $\tau(k)\le 1/(4k)$. This exponent is relevant to uniform estimates of Weyl type for $f(\alpha;P,R)$.

\begin{lemma}\label{lem3.2}
Suppose that $k\ge 2$. Then, uniformly in $1\le Q\le P^{k/2}$, one has the bound
\[
\sup_{\alpha \in \grm(Q)}|f(\alpha ;P,R)|\ll PQ^{\varepsilon -2\tau(k)/k}.
\]
In particular, writing $D=4.5139506$, one has
\[
\sup_{\alpha \in \grm(Q)}|f(\alpha ;P,R)|\ll PQ^{-1/(Dk^2)}.
\]
\end{lemma}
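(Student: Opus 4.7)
The plan is to deduce this minor-arc estimate from Lemma~\ref{lem3.1}, with Dirichlet's approximation theorem providing the input rational approximation. For each $\alpha \in \grm(Q)$ and parameter $M$ to be chosen, Dirichlet at scale $X = 2(MR)^k$ yields coprime $a, q$ with $1 \le q \le 2(MR)^k$ and $|q\alpha - a| \le \tfrac12(MR)^{-k}$, securing the first three hypotheses of Lemma~\ref{lem3.1}. Imposing the side-condition $M \le Q/R$ translates the minor-arc condition $\alpha \notin \grM(Q)$ into the required dichotomy: either $q > Q \ge MR$, or $|q\alpha - a| > QP^{-k} \ge MP^{-k}$.

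Next, I would apply Lemma~\ref{lem3.1} with $t = 2w$ for $w \in \mathbb N$ to be optimized, using the weight-uniform admissible exponent $\Delta_{2w} = k\delta_{2w}$ from Theorem~\ref{thm2.2}. A careful choice of $M = M(Q,w)$ in the permitted range $M \le Q/R$ is required to balance the two terms of Lemma~\ref{lem3.1}. When the unconstrained Weyl-optimal value $M^\ast \approx P^{1-1/(2w+1+\Delta_{2w})}$ satisfies $M^\ast \le Q/R$, the choice $M = M^\ast$ gives $f \ll P^{1-1/(2w+1+\Delta_{2w})}$, which dominates $PQ^{-2\tau_w/k}$ with $\tau_w = (k-2\Delta_{2w})/(4w^2)$ under the constraint $Q \le P^{k/2}$. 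In the remaining range of $Q$ one takes $M$ at the constraint boundary and uses $q \le 2(MR)^k$ together with $Q \le P^{k/2}$ to control the $q$-dependent term. Maximizing $\tau_w$ over $w \in \mathbb N$ then yields $\tau(k)$ and establishes the first assertion.

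For the explicit constant $D = 4.5139506$, I would substitute $\Delta_{2w} = k\delta_{2w}$ via \eqref{2.3} into $(k-2\Delta_{2w})/(4w^2) = k(1-2\delta)/(4w^2)$, treat $w$ as continuous, and differentiate subject to $\delta + \log \delta = 1-2w/k$. This yields the critical equation $1/\delta - \delta + \log \delta = 2$; its numerical solution $\delta \approx 0.2818$ with $w/k \approx 0.9925$ gives $\tau(k) \ge 1/(2Dk)$, hence $2\tau(k)/k \ge 1/(Dk^2)$. The loss from rounding $w$ to an integer near $k$ is negligible.

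The principal obstacle is the case analysis of the second paragraph, and in particular the treatment of $Q$-ranges where the constraint $M \le Q/R$ forces $M$ away from its Weyl-optimal value. The constraint $Q \le P^{k/2}$ is what ultimately ensures the $q$-dependent contribution of Lemma~\ref{lem3.1} stays below the target $PQ^{-2\tau(k)/k + \varepsilon}$, and the weaker exponent $2\tau(k)/k$ (rather than a pure Weyl exponent $\tau(k)$) emerges from this worst-case analysis.
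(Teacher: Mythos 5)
The paper's proof of Lemma~\ref{lem3.2} is a one-line citation to \cite[Lemmas 3.3 and 3.4]{BWFr}, which are Weyl-differencing estimates for smooth Weyl sums. Your proposal instead tries to derive the lemma from Lemma~\ref{lem3.1} together with Dirichlet's theorem. This route cannot work, and the reason is structural: in the proof of Theorem~\ref{thm3.3}, Lemma~\ref{lem3.1} and Lemma~\ref{lem3.2} play \emph{complementary} roles, the former covering the regime $q\le P^{k\sigma/(2\tau)}$ and the latter covering $q>P^{k\sigma/(2\tau)}$. One cannot be deduced from the other.

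Concretely, the bound in Lemma~\ref{lem3.1},
\[
f(\alpha;P,R)\ll M^{1+\varepsilon}+P^{1+\varepsilon}\bigl( M^{-1}(P/M)^{\Delta_t}(1+q(P/M)^{-k})\bigr)^{1/t},
\]
is \emph{increasing} in $q$. On $\grm(Q)$ the Dirichlet approximation at scale $2(MR)^k$ gives only the lower bound $q>Q$ (or $|q\alpha-a|>QP^{-k}$), never an upper bound below the trivial $q\le 2(MR)^k$. Inserting $q\asymp (MR)^k$ into the $q$-dependent term yields
\[
\bigl(M^{-1}(P/M)^{\Delta_t}(MR)^k(P/M)^{-k}\bigr)^{1/t}
=\bigl(R^kP^{\Delta_t-k}M^{2k-\Delta_t-1}\bigr)^{1/t},
\]
and since Lemma~\ref{lem3.1} forces $M>P^{1/2}$, this exponent is positive once $\Delta_t>1$, so the second term exceeds $P$ and the estimate is vacuous. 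Your remark that in this regime one ``uses $q\le 2(MR)^k$ together with $Q\le P^{k/2}$ to control the $q$-dependent term'' therefore does not control anything, and this is precisely the regime Lemma~\ref{lem3.2} is needed for. A genuine proof must instead run a Weyl-type argument (raise $f$ to the power $2w$, difference, apply the mean-value estimate $U_{2w}(P,R)\ll P^{2w-k+\Delta_{2w}+\varepsilon}$); that is what produces the $4w^2$ in the denominator of $\tau(k)=(k-2\Delta_{2w})/(4w^2)$, a signature absent from the Sobolev--Gallagher machinery.

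Your third paragraph, deriving $D$ from $\Delta_{2w}=k\delta_{2w}$ by treating $w$ as continuous and arriving at the critical equation $1/\delta-\delta+\log\delta=2$ with $\delta\approx 0.2818$, $w/k\approx 0.9925$, is correct and matches the optimisation carried out in \cite[Lemma~3.4]{BWFr}. That calculation, however, concerns only the passage from the first displayed bound of the lemma to the second; it does not repair the gap in establishing the first bound.
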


\begin{proof}
The respective conclusions are available from \cite[Lemma 3.3]{BWFr} and \cite[Lemma 3.4]{BWFr}.
\end{proof}

In order to state the next theorem, we introduce the real number $\sigma=\sigma(k)$, defined via the relation
\begin{equation}\label{3.3}
\sigma(k)^{-1}=\inf_{t>k+1}\biggl( t+\frac{1+\Delta_t}{2\tau(k)}\biggr) .
\end{equation}
We then define the associated quantity $\lambda=\lambda(k)$ by putting
\begin{equation}\label{3.4}
\lambda(k)=1-\frac{\sigma(k)}{2\tau(k)}.
\end{equation}

\begin{theorem}\label{thm3.3}
Suppose that $1/2<\lambda<1$. Then one has
\[
\sup_{\alpha\in \grm(P^\lambda R)}|f(\alpha ;P,R)|\ll P^{1-\sigma(k)+\varepsilon}.
\]
\end{theorem}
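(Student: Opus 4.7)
The plan is to apply Dirichlet's theorem to each $\alpha\in\grm(P^\lambda R)$ and split the analysis by the size of the resulting denominator $q$, invoking Lemma~\ref{lem3.1} in the small-$q$ range and Lemma~\ref{lem3.2} in the large-$q$ range; the constants $\sigma(k)$ and $\lambda(k)$ are designed precisely so that the two estimates balance at the critical boundary. Set $M=P^\lambda$. By Dirichlet there exist coprime integers $a$ and $q$ with $1\le q\le 2(MR)^k$ and $|q\alpha-a|\le\tfrac12(MR)^{-k}$; since $\alpha\in\grm(MR)$, either $q>MR$, or else $q\le MR$ forces $|q\alpha-a|>MR\cdot P^{-k}\ge MP^{-k}$, so in either event the hypotheses of Lemma~\ref{lem3.1} are satisfied.

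In the small-$q$ range $q\le(P/M)^k$ the factor $1+q(P/M)^{-k}$ in Lemma~\ref{lem3.1} is $O(1)$, giving
\[
|f(\alpha;P,R)|\ll P^{\lambda+\varepsilon}+P^{1-(\lambda-(1-\lambda)\Delta_t)/t+\varepsilon}.
\]
Choosing $t=t^*$ to essentially attain the infimum in \eqref{3.3} and substituting $1-\lambda(k)=\sigma(k)/(2\tau(k))$ from \eqref{3.4} yields the identity $\lambda(k)-(1-\lambda(k))\Delta_{t^*}=\sigma(k)\,t^*$, so at $\lambda=\lambda(k)$ the second term is exactly $P^{1-\sigma(k)+\varepsilon}$, while the first is subsumed because $\lambda(k)<1-\sigma(k)$ (as $\tau(k)<\tfrac12$). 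In the large-$q$ range $q>(P/M)^k$ I argue that $\alpha\in\grm(P^k/(2q))$: any rival reduced approximation $a'/q'$ with $q'\le P^k/(2q)<q$ must differ from $a/q$, and then the gap $1/(qq')\le|a/q-a'/q'|$ contradicts the triangle inequality applied to the two approximations. Lemma~\ref{lem3.2}, combined with $q\le 2(MR)^k$, then gives
\[
|f(\alpha;P,R)|\ll P\bigl(P^k/(2q)\bigr)^{-2\tau(k)/k+\varepsilon}\ll P^{1-2\tau(k)(1-\lambda)+\varepsilon'},
\]
which at $\lambda=\lambda(k)$ is exactly $P^{1-\sigma(k)+\varepsilon'}$ via the identity $2\tau(k)(1-\lambda(k))=\sigma(k)$.

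This settles the case $\lambda=\lambda(k)$; the range $\lambda>\lambda(k)$ reduces to this via the inclusion $\grm(P^\lambda R)\subseteq\grm(P^{\lambda(k)}R)$, and for $1/2<\lambda<\lambda(k)$ the large-$q$ bound already improves on $P^{1-\sigma(k)}$ while in the small-$q$ range the containment $q\le(P/M)^k<P^{k/2}/2$ (using $\lambda>\tfrac12$) forces $\alpha\in\grm(P^{k/2})$ so that Lemma~\ref{lem3.2} at $Q=P^{k/2}$ finishes the job. I expect the main obstacle to be the algebraic verification that Lemma~\ref{lem3.1} and Lemma~\ref{lem3.2} really balance at $\lambda=\lambda(k)$ with the optimising $t^*$, which is precisely the content jointly encoded in \eqref{3.3} and \eqref{3.4}; the Dirichlet-uniqueness argument in the large-$q$ case is a standard comparison of rational approximations by contrast, and the extension across the whole parameter range $1/2<\lambda<1$ is essentially bookkeeping.
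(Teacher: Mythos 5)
Your overall plan—Dirichlet, then split on the size of $q$, Lemma~\ref{lem3.1} for small $q$ and Lemma~\ref{lem3.2} for large $q$, with \eqref{3.3} and \eqref{3.4} designed to balance the two—matches the paper's, and your small-$q$ computation (including the identity $\lambda(k)-(1-\lambda(k))\Delta_{t^*}=\sigma(k)t^*$ and the check $\lambda(k)<1-\sigma(k)$) is sound. However, the large-$q$ step contains a genuine gap.

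You set $Q'=P^k/(2q)$ and claim $\alpha\in\grm(Q')$ by asserting ``any rival reduced approximation $a'/q'$ with $q'\le P^k/(2q)<q$ must differ from $a/q$.'' The inequality $P^k/(2q)<q$ requires $q>P^{k/2}/\sqrt2$, but in your large-$q$ range you only have $q>(P/M)^k=P^{k(1-\lambda)}$, and since $\lambda>1/2$ one has $k(1-\lambda)<k/2$. Thus in the sub-range $(P/M)^k<q\le P^{k/2}/\sqrt2$ the rival may be $a/q$ itself: here $q\le P^k/(2q)=Q'$ and $|q\alpha-a|\le\tfrac12(MR)^{-k}\le 1/(2q)=Q'P^{-k}$, so $\alpha$ lies in $\grM(Q')$ by virtue of the very approximation you started with, and the triangle-inequality argument never gets off the ground. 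Worse, in this sub-range $Q'\ge P^{k/2}/\sqrt2$, so $\grm(Q')$ is empty by Dirichlet and the claimed membership is impossible anyway; Lemma~\ref{lem3.2} also formally requires $Q\le P^{k/2}$. The issue is precisely that a $q$-dependent threshold $P^k/(2q)$ does not behave well over the whole range of $q$.

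The paper avoids this by working with the \emph{fixed} quantity $Q=\tfrac12 R^{-k}P^{k\sigma/(2\tau)}$, for which $QP^{-k}=\tfrac12(MR)^{-k}$, and (implicitly) taking the Dirichlet denominator $q$ to be minimal. Then $q>P^{k\sigma/(2\tau)}>Q$ forces $\alpha\in\grm(Q)$: any witness $(a',q')$ for $\alpha\in\grM(Q)$ would satisfy $q'\le Q<q\le2(MR)^k$ and $|q'\alpha-a'|\le\tfrac12(MR)^{-k}$, hence be a smaller Dirichlet denominator, a contradiction. Equivalently and perhaps most cleanly, one can dispense with Dirichlet at the top level and split directly on whether $\alpha\in\grm(Q)$ (apply Lemma~\ref{lem3.2}) or $\alpha\in\grM(Q)$ (take the witness $(a,q)$ with $q\le Q<P^{k\sigma/(2\tau)}$, verify the hypotheses of Lemma~\ref{lem3.1} using $\alpha\in\grm(MR)$, and proceed as in your small-$q$ case). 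Your closing remark about the range $1/2<\lambda<\lambda(k)$ also contains a slip: the containment $q\le(P/M)^k<P^{k/2}/2$ certainly does not force $\alpha\in\grm(P^{k/2})$; small $q$ together with a good approximation places $\alpha$ on major arcs, not minor ones. (The theorem is in any case meant to be read with $\lambda=\lambda(k)$ and the hypothesis $1/2<\lambda<1$ as a constraint on that specific quantity.)
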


\begin{proof}
We put $M=P^\lambda$ and apply Lemma \ref{lem3.1}. By Dirichlet's approximation theorem, there exist $a\in \mathbb Z$ and $q\in \mathbb N$ with
\[
(a,q)=1,\quad q\le 2(MR)^k\quad \text{and}\quad |q\alpha -a|\le \tfrac{1}{2}(MR)^{-k}.
\]
When $\alpha \in \grm(P^\lambda R)$, it follows that either $q>MR$ or $|q\alpha -a|>MRP^{-k}$, and hence Lemma \ref{lem3.1} shows that for each 
$t>k+1$ and each weight-uniform admissible exponent $\Delta_t$, one has
\begin{align}
f(\alpha;P,R)&\ll P^{\lambda+\varepsilon}+P^{1+\varepsilon}\bigl( P^{-\lambda+(1-\lambda)\Delta_t}(1+qP^{-k(1-\lambda)})\bigr)^{1/t}\notag \\
&\ll P^{1-\sigma}+P^{1+\varepsilon}\bigl( P^{-1+\sigma (1+\Delta_t)/(2\tau)}(1+qP^{-k\sigma/(2\tau)})\bigr)^{1/t}.\label{3.5}
\end{align}
Observe that from \eqref{3.3} one has
\[
\sup_{t>k+1}\frac{1}{t}\biggl( \frac{1}{\sigma}-\frac{1+\Delta_t}{2\tau}\biggr) \ge 1,
\]
whence
\[
\sup_{t>k+1}\frac{1}{t}\biggl( 1-\frac{\sigma (1+\Delta_t)}{2\tau}\biggr) \ge \sigma .
\]
Hence, we deduce from \eqref{3.5} that when $q\le P^{k\sigma/(2\tau)}$, one has
\begin{equation}\label{3.6}
f(\alpha ;P,R)\ll P^{1-\sigma +\varepsilon}.
\end{equation}

\par It remains to handle the situation in which $q>P^{k\sigma/(2\tau)}$. Write $Q=\tfrac{1}{2}R^{-k}P^{k\sigma/(2\tau)}$, and note that one then has
\[
\tfrac{1}{2}(MR)^{-k}=\tfrac{1}{2}R^{-k}\bigl( P^{1-\sigma /(2\tau)}\bigr)^{-k}=QP^{-k}.
\]
Thus, we see that $\alpha \in \grm(Q)$, and hence Lemma \ref{lem3.2} delivers the bound
\begin{equation}\label{3.7}
f(\alpha ;P,R)\ll PQ^{\varepsilon -2\tau/k}\ll P^{1-\sigma +\varepsilon}R.
\end{equation}
In view of our conventions concerning $\varepsilon$ and $R$, the conclusion of the theorem follows on combining \eqref{3.6} and \eqref{3.7}.
\end{proof}

\section{The proof of Theorem 1.1}
The first goal of this section is to optimise parameters in Theorem \ref{thm3.3} so as to prove Theorem \ref{thm1.1}. 

\begin{proof}[The proof of Theorem 1.1] We assume throughout that $k\ge 6$. The second conclusion of Lemma \ref{lem3.2} shows that one can proceed 
using the value $\tau = 1/{(2Dk)}$. Also, as a consequence of Theorem \ref{thm2.2}, the exponent $\Delta_t=k\delta_t$ is weight-uniform admissible for 
$t\ge 4$, with $\delta_t$ defined by equation \eqref{2.3}. In particular, one has $\Delta_t\le k \mathrm e^{1-t/k}$. Thus, the exponent $\sigma$ defined in 
\eqref{3.3} satisfies the bound
\[
\sigma^{-1} \le \inf_{t>k+1} \big(t+Dk(1+k\mathrm e^{1-t/k})\big).
\]
One may verify that the infimum on the right hand side here is attained when $t=k\log k + k(1+\log D)$, and thus
\begin{align*}
\sigma^{-1} &\le k\log k + k(1+\log D) +Dk(1+1/D)\\
& = k\log k + k(D+2+\log D).
\end{align*}
We therefore conclude that one has
$
\sigma^{-1} \le k(\log k + \phi),
$
where $\phi = 8.0211233\ldots$. We have now proved that
\[
\sup_{\alpha\in\mathfrak m(P^\lambda R)} |f(\alpha;P,R)| \ll P^{1-\sigma+\varepsilon}.
\]
Here $\lambda$ is the number defined in \eqref{3.4}. Since $P^\lambda R \le P$ and $f$ has period $1$, this establishes a little more than is actually 
claimed in Theorem \ref{thm1.1}.
\end{proof}

By making use of the tables of admissible exponents  to be found in \cite{FIWP1} ($k=6$) and \cite{FIWP4} ($7\le k\le 20$), one may numerically compute the 
value of the exponent $\sigma(k)$ defined via \eqref{3.3}. In the table below, we record values of $2w$, and a corresponding admissible exponent 
$\Delta_{2w}$. These numbers are taken from \cite{FIWP1, FIWP4} where the numbers $\lambda_w=2w-k+\Delta_{2w}$ are tabulated. We also supply an 
upper bound for the number $T(k)$ having the property that $\tau(k)>T(k)^{-1}$. These data have been computed for $6\le k\le 13$ directly from the 
definition, and are tabulated for $14\le k\le 20$ in \cite{BWCr}. Here, we note a typographic error in the latter source. Thus, the heading $w$ in the second 
column of \cite[Table 2]{BWCr} should read $2w$ (in place of $w$). Finally, we report numbers $S(k)$ having the property that $\sigma(k)>S(k)^{-1}$. All 
figures are rounded up in the last digit displayed. We summarise these conclusions in the form of a theorem.

\begin{theorem}\label{thm4.1}
When $6\le k\le 20$, one has the bound
\[
\sup_{\alpha\in\mathfrak m} |f(\alpha;P,R)| \ll P^{1-\sigma(k)+\varepsilon}, 
\]
where $\sigma(k)>S(k)^{-1}$.
\end{theorem}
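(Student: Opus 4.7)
The plan is to appeal to Theorem \ref{thm3.3} and then to carry out the numerical optimisation over $t$ in the defining relation \eqref{3.3} for $\sigma(k)$, making use of the tabulated admissible exponents for small $k$.

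First, I would check that $\lambda(k)$ lies in the interval $(1/2,1)$ throughout the range $6\le k\le 20$. Given that, the inclusion $\mathfrak m=\mathfrak m(P)\subseteq\mathfrak m(P^{\lambda(k)}R)$ holds whenever $P^{\lambda(k)}R\le P$, which follows from the standing convention $R\le P^\eta$ on taking $\eta<1-\lambda(k)$. An application of Theorem \ref{thm3.3} then delivers
\[
\sup_{\alpha\in\mathfrak m}|f(\alpha;P,R)|\ll P^{1-\sigma(k)+\varepsilon},
\]
and it remains to establish the numerical lower bound $\sigma(k)>S(k)^{-1}$.

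Second, I would estimate $\tau(k)$ from below. For $6\le k\le 13$, this requires computing $(k-2\Delta_{2w})/(4w^2)$ at each tabulated admissible exponent $\Delta_{2w}$ drawn from \cite{FIWP1} ($k=6$) and \cite{FIWP4} ($7\le k\le 13$), and retaining the maximum over $w$; recall that these sources record $\lambda_w=2w-k+\Delta_{2w}$, so a brief conversion is required. For $14\le k\le 20$, the required bounds $\tau(k)>T(k)^{-1}$ are already listed in \cite[Table 2]{BWCr}, modulo the typographical correction flagged in the preamble. Substituting any such bound into \eqref{3.3} yields
\[
\sigma(k)^{-1}\le\inf_{t>k+1}\Bigl(t+\tfrac{1}{2}T(k)(1+\Delta_t)\Bigr),
\]
in which $\Delta_t$ may be any weight-uniform admissible exponent. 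I would evaluate this infimum by sampling the integer even values $t=2w$ within the tabulated range, inserting the tighter admissible exponents from \cite{FIWP1, FIWP4}, and supplementing with the continuous bound $\Delta_t=k\delta_t$ from Theorem \ref{thm2.2} in the vicinity of the calculus optimum $t\approx k\log k+k(1+\log(T(k)/2))$ determined by differentiating with respect to $t$. The smallest value encountered supplies the upper bound on $\sigma(k)^{-1}$ from which $S(k)$ is extracted.

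The main obstacle is computational rather than conceptual: selecting the optimal $w$ in the definition of $\tau(k)$, then the optimal $t$ in \eqref{3.3}, and reconciling the differing parametrisations of \cite{FIWP1} and \cite{FIWP4}. A minor but non-trivial point is verifying that the true minimum over real $t$ is not missed between tabulated even integers, for which it suffices to exploit the convexity of $t\mapsto t+\tfrac12 T(k)(1+k\delta_t)$ inherited from \eqref{2.3} and to check that, at each tabulated breakpoint, the table improves on the continuous bound by enough to justify the reported $S(k)$.
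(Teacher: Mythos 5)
Your proposal follows the same route as the paper: apply Theorem \ref{thm3.3} (after noting the monotonicity inclusion $\grm=\grm(P)\subseteq\grm(P^{\lambda(k)}R)$ once $R\le P^\eta$ with $\eta$ small), then compute $\tau(k)$ from tabulated admissible exponents $\Delta_{2w}$ (for $6\le k\le 13$) or from \cite[Table 2]{BWCr} (for $14\le k\le 20$), and finally bound $\sigma(k)^{-1}$ from above by plugging a specific $t$ and $\Delta_t$ into \eqref{3.3}. One small clarification: since Theorem \ref{thm4.1} only asserts a lower bound $\sigma(k)>S(k)^{-1}$, a single evaluation of $t+\tfrac12 T(k)(1+\Delta_t)$ at the tabulated $(t,\Delta_t)$ already yields the required upper bound on the infimum in \eqref{3.3}; the convexity discussion about not ``missing the true minimum'' between even integers is unnecessary for the stated claim (it would only be needed to certify optimality of the tabulated $S(k)$, which the theorem does not assert).
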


\begin{table}[h]
\begin{center}
\begin{tabular}{ccccccc}
\toprule
\ $k$ & $2w$  & $\Delta_{2w}$ & \ $T(k)$ & \ $t$ & $\Del_t$ & \ $S(k)$  \\
\toprule
\ $6$ & $10$ & $1.724697$ & \ $39.2064$ & \ $22$ & $0.086042$ & \ $43.2899$\\
\ $7$ & $12$ & $2.014382$ & \ $48.4647$ & \ $26$ & $0.192538$ & \ $54.8980$\\
\ $8$ & $14$ & $2.310600$ & \ $58.0088$ & \ $32$ & $0.189117$ & \ $66.4897$\\
\ $9$ & $16$ & $2.603928$ & \ $67.5080$ & \ $38$ & $0.190186$ & \ $78.1736$\\
$10$ & $18$ & $2.894572$ & \ $76.9440$ & \ $44$ & $0.192696$ & \ $89.8855$\\
$11$ & $20$ & $3.184973$ & \ $86.3921$ & \ $48$ & $0.241313$ & $101.6199$\\
$12$ & $22$ & $3.470081$ & \ $95.6553$ & \ $54$ & $0.239541$ & $113.2844$\\
$13$ & $24$ & $3.755717$ & $104.9455$ & \ $60$ & $0.239277$ & $125.0283$\\
$14$ & $26$ & $4.039939$ & $114.1869$ & \ $66$ & $0.240167$ & $136.8055$\\
$15$ & $28$ & $4.323087$ & $123.3903$ & \ $74$ & $0.209471$ & $148.6185$\\
$16$ & $30$ & $4.606286$ & $132.5981$ & \ $80$ & $0.213791$ & $160.4732$\\
$17$ & $32$ & $4.888677$ & $141.7763$ & \ $86$ & $0.218395$ & $172.3698$\\
$18$ & $34$ & $5.170691$ & $150.9411$ & \ $92$ & $0.223249$ & $184.3193$\\
$19$ & $36$ & $5.451758$ & $160.0695$ & \ $98$ & $0.228287$ & $196.3057$\\
$20$ & $38$ & $5.732224$ & $169.1748$ & $104$ & $0.233496$ & $208.3383$\\
\bottomrule
\end{tabular}\\[6pt]
\end{center}
\caption{Choice of parameters for $6\le k\le 20$.}\label{tab2}
\end{table}

When $k\ge 10$, the bounds supplied by Theorems \ref{thm1.1} and \ref{thm4.1} are superior to any previously available bound of Weyl's type for either 
smooth or classical Weyl sums. When $k\le 9$, however, the bound 
\[ 
\sup_{\alpha\in\mathfrak m} |f(\alpha;P,P)| \ll P^{1 - \frac{1}{k(k-1)}+\varepsilon},
\]
available via recent developments in Vinogradov's mean value theorem (see \cite{BDG2016, Woo2019}), provides superior exponents for classical Weyl 
sums. For smooth exponential sums, meanwhile, the estimates for $\sigma(k)$ in the table are still superior to those listed in \cite{FIWP4}. 

\section{The fractional part of $\alpha n^k$}
The proof of Theorem \ref{thm1.2} is achieved via a pedestrian modification of \cite[\S6]{W95}, by utilising Theorem \ref{thm3.3}. Let 
$\nu=(\sigma(k)-\rho(k))/2$, so that $0<\nu<\sigma(k)$. Then the only issue to check is that, with $H=P^{\sigma(k)-\nu}$, one has
\[
(HP^{\lambda(k)}R)^{k-1}P^{\lambda(k)-k}\ll P^{-\sigma(k)}.
\]
In view of \eqref{3.3} and \eqref{3.4}, however, one has
\[
(k-1)\sigma (k)+k\lambda(k)-k\le \biggl( k-1-\frac{k}{2\tau(k)}\biggr) \sigma (k)<-2\sigma (k).
\]
With plenty of room to spare, this suffices to confirm the validity of the argument corresponding to \cite[\S6]{W95}, and we find that
\[
\min_{1\le n\le N}\| \alpha n^k\|\ll N^{\nu-\sigma(k)}.
\]
Since $\sigma (k)>\rho(k)$, the desired conclusion follows.\par

By applying the same argument as described above for $10\le k\le 20$, using the explicit exponents calculated in the previous 
section as recorded in the table therein, one obtains the following conclusion.

\begin{theorem}\label{thm5.1}
Let $k$ be an integer with $10\le k\le 20$. Then, with the exponent $S(k)$ defined as in Table 1, one has
\[
\min_{1\le n\le N}\| \alpha n^k\| \ll N^{-1/S(k)}.
\]
\end{theorem}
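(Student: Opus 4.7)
The plan is to mirror the argument already carried out for Theorem \ref{thm1.2}, but with the abstract bound $\sigma(k)^{-1}\le k(\log k+\phi)$ replaced throughout by the sharper numerical values $\sigma(k)>S(k)^{-1}$ recorded in Table \ref{tab2} via Theorem \ref{thm4.1}. Since the proof of Theorem \ref{thm1.2} was only a transcription of \cite[\S6]{W95} using Theorem \ref{thm3.3} as the black-box minor arc bound, nothing in the structural argument changes; only the numerical input is refined.

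Concretely, I would fix $k\in\{10,11,\ldots,20\}$ and set $\nu=\tfrac{1}{2}(\sigma(k)-1/S(k))>0$, so that $\sigma(k)-\nu>1/S(k)$. With $H=P^{\sigma(k)-\nu}$, the entire Fourier-analytic deduction from a minor arc bound to a small fractional part of $\alpha n^k$, as executed in \cite[\S6]{W95}, depends on verifying the single geometric inequality
\[
(HP^{\lambda(k)}R)^{k-1}P^{\lambda(k)-k}\ll P^{-\sigma(k)}.
\]
Exactly as in the proof of Theorem \ref{thm1.2}, this reduces via \eqref{3.3} and \eqref{3.4} to checking
\[
\Bigl(k-1-\frac{k}{2\tau(k)}\Bigr)\sigma(k)<-2\sigma(k),
\]
which in turn is equivalent to $k/(2\tau(k))>k+1$, a trivial consequence of $\tau(k)\le 1/(4k)$. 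Thus the inequality holds with ample slack for every $k$ in the range under consideration, and the argument goes through verbatim.

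Applying Theorem \ref{thm3.3} with the same value of $\lambda=\lambda(k)$ then yields
\[
\min_{1\le n\le N}\|\alpha n^k\|\ll N^{\nu-\sigma(k)},
\]
and since $\sigma(k)-\nu>1/S(k)$ by construction, this delivers the bound $\ll N^{-1/S(k)}$ claimed in the theorem. The only genuine work is the numerical computation of $\sigma(k)$ from \eqref{3.3}, which has already been discharged in the table of the previous section via the admissible exponents from \cite{FIWP1, FIWP4}; no step in the present argument requires more than routine bookkeeping, and there is no obstacle beyond confirming that the tabulated values of $S(k)$ are consistent with the values of $T(k)$ and the choice of optimising $t$ used to evaluate \eqref{3.3}.
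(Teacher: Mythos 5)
Your proposal is correct and matches the paper's intent exactly: the paper's proof of Theorem \ref{thm5.1} is a one-sentence remark that one applies the same argument as for Theorem \ref{thm1.2}, substituting the numerical values of $\sigma(k)$ obtained in Theorem \ref{thm4.1}, and your write-up simply fills in those steps (setting $\nu=\tfrac12(\sigma(k)-1/S(k))$, verifying the geometric inequality via $\tau(k)\le 1/(4k)$, and reading off the exponent) with no deviation in method.
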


This theorem improves on the earlier results of \cite{FIWP4} for $k\ge 10$. Such conclusions are also addressed by Baker in the 
discussion following the statement of \cite[Theorem 3]{Bak2016}. Our new conclusions recorded in Theorems \ref{thm1.2} and 
\ref{thm5.1} improve on the estimates recorded in part (ii) of the latter discussion for $k\ge 10$. In \cite[Theorem 2]{Bak2016}, Baker 
points out ({\it inter alia}) that the new conclusions available from recent breakthroughs on Vinogradov's mean value theorem 
(see \cite{BDG2016, Woo2019}, for example) yield the upper bound
\[
\min_{1\le n\le N}\| \alpha n^k\| \ll N^{\varepsilon -1/(k(k-1))}.
\]
On noting that the exponent $S(k)$ recorded in Table 1 exceeds $k(k-1)$ for $k\le 9$, we see that our new estimates are superior to 
those available via this progress on Vinogradov's mean value theorem only for $k\ge 10$.

\bibliographystyle{amsbracket}
\providecommand{\bysame}{\leavevmode\hbox to3em{\hrulefill}\thinspace}

\end{document}